\newcommand{\CC}{{\mathbb C}}
\renewcommand{\dim}{\mathrm{dim}}
\newcommand{\OO}{\mathcal O}
\newcommand{\J}{\mathscr J}
\newcommand{\Sing}{{\rm Sing}}
\newtheorem{lema}{Lemma}[section]
\newtheorem{cor}[lema]{Corollary}
\newtheorem{teo}[lema]{Theorem}
\newtheorem*{teorema}{Theorem}
\newtheorem{prop}[lema]{Proposition}
\theoremstyle{definition}
\newtheorem{remark}[lema]{Remark}
\newtheorem{defi}[lema]{Definition}
\begin{document}
\title{On Gauss-Bonnet and Poincar\'e-Hopf type  theorems for  complex  $\partial$-manifolds}
\dedicatory{\it To Omegar Calvo-Andrade on the occasion of his 60th birthday}
\begin{abstract}
We prove a  Gauss-Bonnet and Poincar\'e-Hopf type  theorem for  complex  $\partial$-manifold 
$\tilde{X} = X -  D$, where $X$ is a complex compact manifold and $D$ is a reduced divisor. We will consider the cases such that 
$D$ has isolated singularities and also if  $D$  has a  (not necessarily irreducible) decomposition 
$D=D_1\cup D_2$  such that $D_1$, $D_2$  have  isolated   singularities and  
$C=D_1\cap D_2$  is a  codimension $2$ variety  with  isolated   singularities.    
\end{abstract}

\author{Maur\'icio Corr\^ea}
\address{Maur\'icio Corr\^ea \\ Icex - UFMG ,  Av. Ant\^onio Carlos 6627, 30123-970,  Belo Horizonte-MG, Brazil}
\email{mauriciojr@ufmg.br}
\author{Fernando Louren\c co}
\address{ Fernando Louren\c co \\ DEX - UFLA ,  Campus Universit\'ario, Lavras MG, Brazil, CEP 37200-000}
\email{fernando.lourenco@dex.ufla.br}
\author{Diogo Machado}
\address{Diogo Machado \\ DMA - UFV,  Avenida Peter Henry Rolfs, s/n - Campus Universit\'ario, 36570-900 Vi\ cosa- MG,
Brazil}
\email{diogo.machado@ufv.br}
\author{Antonio M. Ferreira}
\address{Antonio M. Ferreira \\ DEX - UFLA ,  Campus Universit\'ario, Lavras MG, Brazil, CEP 37200-000}
\email{antoniosilva@dex.ufla.br}
\subjclass{Primary 32S65, 32S25, 14C17 } 
\keywords{Logarithmic foliations,  Gauss-Bonnet type Theorem, Poincar\'e-Hopf  index, Residues}
\maketitle

\section{Introduction}
Let $X$ be a  compact complex manifold of dimension $n$. The classical Chern-Gauss-Bonnet theorem  \cite{chern}  tells us that 
\begin{eqnarray}\label{form001} 
\int_{X}c_n(\Omega_X^1)  = (-1)^n\chi ( X),
\end{eqnarray}
where  $\chi ( X)$ denotes the Euler characteristic  of $X$ and $c_n(\Omega_X^1)$  is the $n$-th Chern class of the cotangent bundle $\Omega_X^1$ of $X$. 
A complex  $\partial$-manifold \cite{YN} is a complex manifold of the form 
$\tilde{X} = X -  D$, where $X$ is an $n$-dimensional complex compact manifold and  $ D\subset X$ is a  divisor
which is called the boundary divisor.
S. Iitaka in \cite{Lita}  proposed a version of Gauss-Bonnet theorem for  $\partial$-manifold \cite{YN}. Such version was independently proved by 
Y. Norimatsu \cite{YN}, R. Silvotti \cite{RS} and P. Aluffi \cite{Aluffi}:

\begin{teorema}[Norimatsu-Silvotti-Aluffi]
Let $\tilde{X}$ be a complex manifold such that $\tilde{X} = X -  D$, where $X$ is an $n$-dimensional complex compact manifold and $ D$ is a hypersurface with normal crossings on $X$. Then 
$$
\int_{X}c_n(\Omega_X^1(\log\,  D)) \,\,\, =\,\,\, (-1)^n\chi (\tilde{X}),
$$
 where $\Omega_X^1(\log\,  D)$ denotes the sheaf of logarithmics $1$-forms along $ D$ and $\chi (\tilde{X})$ denotes the Euler characteristic given by 
$$
\chi (\tilde{X}) = \displaystyle \sum_{i=1}^n \dim \  H^i_{c}(\tilde{X}, \CC).
$$
\end{teorema}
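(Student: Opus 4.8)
The plan is to rerun the Hirzebruch--Riemann--Roch proof of the classical Chern--Gauss--Bonnet theorem \eqref{form001} with the ordinary de Rham complex replaced by the logarithmic de Rham complex $\big(\Omega_X^\bullet(\log D),d\big)$, where $\Omega_X^p(\log D):=\wedge^{p}\Omega_X^1(\log D)$. The key input is the logarithmic Poincar\'e lemma: for a normal crossing divisor $D$ the natural morphism $\Omega_X^\bullet(\log D)\to Rj_\ast\CC_{\tilde X}$, with $j\colon\tilde X\hookrightarrow X$ the inclusion, is a quasi-isomorphism of complexes of sheaves on $X$ --- a purely local statement around $D$. Hence $\mathbb H^{k}\big(X,\Omega_X^\bullet(\log D)\big)\cong H^{k}(\tilde X,\CC)$ for all $k$, and from the hypercohomology spectral sequence one gets $\sum_k(-1)^k\dim\mathbb H^{k}\big(X,\Omega_X^\bullet(\log D)\big)=\sum_{p}(-1)^{p}\chi\big(X,\Omega_X^{p}(\log D)\big)$. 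Since the Euler characteristic $\chi(\tilde X)$ of the statement (taken with compact supports) agrees with $\sum_i(-1)^i\dim H^i(\tilde X,\CC)$ by Poincar\'e duality on the $2n$-real-dimensional manifold $\tilde X$, this yields
\[
\chi(\tilde X)\;=\;\sum_{p=0}^{n}(-1)^{p}\,\chi\big(X,\Omega_X^{p}(\log D)\big).
\]

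Next I would expand the right-hand side by Hirzebruch--Riemann--Roch on the compact complex manifold $X$, namely $\chi\big(X,\Omega_X^{p}(\log D)\big)=\int_X\mathrm{ch}\big(\Omega_X^{p}(\log D)\big)\,\mathrm{td}(\mathcal T_X)$, and collect signs:
\[
\chi(\tilde X)\;=\;\int_X\Big(\sum_{p=0}^{n}(-1)^{p}\,\mathrm{ch}\big(\wedge^{p}\Omega_X^{1}(\log D)\big)\Big)\,\mathrm{td}(\mathcal T_X).
\]
Set $E:=\mathcal T_X(-\log D)$, the locally free rank-$n$ sheaf dual to $\Omega_X^{1}(\log D)$, with Chern roots $a_1,\dots,a_n$. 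A splitting-principle computation gives $\sum_{p}(-1)^{p}\mathrm{ch}(\wedge^{p}E^{\vee})=\prod_{i=1}^{n}(1-e^{-a_i})$, whose lowest-degree term is $a_1\cdots a_n=c_n(E)$ while all higher-degree terms vanish on the $n$-fold $X$; hence $\sum_{p}(-1)^{p}\mathrm{ch}\big(\Omega_X^{p}(\log D)\big)=c_n(E)$ exactly. Since $c_n(E)$ already has top degree, multiplication by $\mathrm{td}(\mathcal T_X)$ retains only the degree-$0$ part $1$ of the Todd class, so
\[
\chi(\tilde X)\;=\;\int_X c_n\big(\mathcal T_X(-\log D)\big)\;=\;(-1)^{n}\int_X c_n\big(\Omega_X^{1}(\log D)\big),
\]
which is the claim (this is literally the Hirzebruch--Riemann--Roch proof of \eqref{form001} carried over with $\log D$ inserted everywhere).

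The one genuine obstacle I anticipate is the logarithmic Poincar\'e lemma / comparison step: it is where the normal crossing hypothesis is really used and the only part that is not elementary, so I would simply quote it (Deligne, Hodge II). If instead a proof using \emph{only} \eqref{form001} is wanted, one can induct on the number $k$ of components of $D=D_1+\dots+D_k$ (take $D$ simple normal crossing, or read $\mathcal O_{D_k}$ below as the pushforward from the normalization of $D_k$). From the residue sequences
\[
0\to\Omega_X^1(\log D')\to\Omega_X^1(\log D)\to\mathcal O_{D_k}\to 0,\qquad D':=D-D_k,
\]
\[
0\to\mathcal O_{D_k}(-D_k)\to\Omega_X^1(\log D')\big|_{D_k}\to\Omega_{D_k}^1\big(\log(D'\cap D_k)\big)\to 0,
\]
both checked on the local model $D=\{x_1\cdots x_r=0\}$, together with the projection formula for $\iota_k\colon D_k\hookrightarrow X$ applied to the expansion $c\big(\Omega_X^1(\log D)\big)=c\big(\Omega_X^1(\log D')\big)\cdot(1-[D_k])^{-1}$ (with $[D_k]:=c_1(\mathcal O_X(D_k))$), a telescoping sum produces the recursion
\[
\int_X c_n\big(\Omega_X^1(\log D)\big)=\int_X c_n\big(\Omega_X^1(\log D')\big)+\int_{D_k}c_{\dim D_k}\big(\Omega_{D_k}^1(\log(D'\cap D_k))\big).
\]
Iterating down to the empty divisor gives $\int_X c_n\big(\Omega_X^1(\log D)\big)=\sum_{I\subseteq\{1,\dots,k\}}\int_{D_I}c_{\dim D_I}\big(\Omega_{D_I}^1\big)$ with $D_I:=\bigcap_{i\in I}D_i$ (smooth of dimension $n-|I|$); applying \eqref{form001} on each compact stratum $D_I$ and the inclusion--exclusion identity $\chi(\tilde X)=\sum_{I}(-1)^{|I|}\chi(D_I)$ for the compactly supported Euler characteristic then gives $\int_X c_n\big(\Omega_X^1(\log D)\big)=\sum_I(-1)^{\dim D_I}\chi(D_I)=(-1)^{n}\chi(\tilde X)$.
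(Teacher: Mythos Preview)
The paper does not prove this theorem: it is stated in the Introduction as a known result, attributed to Norimatsu \cite{YN}, Silvotti \cite{RS}, and Aluffi \cite{Aluffi}, and serves only as background motivating the paper's own results on non--normal--crossing boundary divisors. There is therefore no proof in the paper to compare your proposal against.

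That said, both arguments you sketch are correct. Your first route (logarithmic Poincar\'e lemma plus Hirzebruch--Riemann--Roch, reducing to the identity $\sum_p(-1)^p\mathrm{ch}(\wedge^pE^\vee)=\prod_i(1-e^{-a_i})$) is essentially Silvotti's approach in \cite{RS}. Your second route (induction on the number of components via the residue sequence and inclusion--exclusion on the strata $D_I$) is closest to Norimatsu's original argument \cite{YN} and, more to the point, is the one that matches the techniques this paper actually uses: the computations in \S\ref{dd002} and Theorem \ref{teo101} proceed exactly by manipulating residue-type exact sequences (\ref{seq1}), (\ref{seq0002}), (\ref{Log12}) to express $c(\Omega_X^1(\log D))$ as a product of $c(\Omega_X^1)$ with the $c(\mathcal O_{D_i})$, then push forward to the $D_i$ and $C$ and identify the resulting characteristic numbers via known formulas of Suwa. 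If you wanted to align your write-up with the paper's toolkit, the inductive/residue argument is the natural choice; the HRR argument is slicker but imports machinery (Deligne's quasi-isomorphism, HRR on compact complex manifolds) that the paper otherwise never touches.
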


 We recall that a hypersurface $D$ on a complex manifold $X$ of dimension $n$ is  normal crossing if any  irreducible component of  $D$ is  smooth and for every point $p\in X$ a local equation of $D$ is  $z_1   \cdots  z_r$ 
 for independent local parameters   $z_i$ in  $\mathcal{O}_{p,X}$ with  $r\leq n$.

  X. Liao has provided  formulas in \cite{Liao}  in terms of the  Chern-Schwartz-MacPherson class. 
On the other hand, the Poincar\'e-Hopf theorem applied for a compact complex manifold $X$ with a holomorphic  vector field $v\in H^0(X,TX)$, with isolated singularities, gives us  the following 
$$
\chi( X) =\sum_{p\in \Sing(v)\cap  X }PH(v,p),
$$
where $PH(v,p)$ denotes the Poincar\'e-Hopf index of $v$ on $p$.
In \cite{MD,MD3} the first and third named authors  have  proved  the following Poincar\'e-Hopf type theorem for  $\partial$-manifolds with  boundaries  divisors having  normal crossing singularities.   
\begin{teorema}  
Let $\tilde{X}$ be a complex manifold such that $\tilde{X} = X -  D$, where $X$ is an $n$-dimensional  complex compact  manifold, $ D$ is a reduced  normal crossing hypersurface on $X$.  Let $v$ be a holomorphic vector field on $X$, with isolated singularities (non-degenerate) and logarithmic along $ D$. Then 
$$
\chi(\tilde{X}) =\sum_{x\in \Sing(v)\cap  \tilde{X} }PH(v,x),
$$
where $PH(v,x)$ denotes the Poincar\'e-Hopf index of $v$ at  $x$.
\end{teorema}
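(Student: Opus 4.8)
The plan is to reduce the statement to the Gauss--Bonnet type theorem of Norimatsu--Silvotti--Aluffi together with a localization formula for the top Chern class of the logarithmic tangent bundle. Since $D$ is a reduced normal crossing hypersurface, $\Omega_X^1(\log\, D)$ is a locally free sheaf of rank $n$, and its dual is the sheaf $TX(-\log\, D)$ of holomorphic vector fields tangent to $D$; hence a holomorphic vector field $v$ on $X$ that is logarithmic along $D$ is exactly a global holomorphic section of the rank $n$ bundle $E:=TX(-\log\, D)$. Because $c_n(E^\vee)=(-1)^n c_n(E)$, the Norimatsu--Silvotti--Aluffi theorem gives
$$
\chi(\tilde X)\;=\;(-1)^n\int_X c_n\big(\Omega_X^1(\log\, D)\big)\;=\;\int_X c_n(E).
$$
So it suffices to identify $\int_X c_n(E)$ with $\sum_{x\in\Sing(v)\cap\tilde X}PH(v,x)$.

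First I would show that the zero scheme $Z(v)\subset X$ of $v$ regarded as a section of $E$ is contained in $\tilde X$, i.e. $Z(v)\cap D=\emptyset$. This is a local computation at points of $D$. Near a point $p\in D$ choose coordinates $(z_1,\dots,z_n)$ with $D=\{z_1\cdots z_k=0\}$ and $z_i(p)=0$ for $i\le k$; then $E$ has the local frame $z_1\partial_{z_1},\dots,z_k\partial_{z_k},\partial_{z_{k+1}},\dots,\partial_{z_n}$, and a logarithmic field is written $v=\sum_{i\le k} z_i a_i\partial_{z_i}+\sum_{i>k}a_i\partial_{z_i}$, so that $v$, as a section of $E$, has components $(a_1,\dots,a_n)$. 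If all $a_i(p)=0$, then $v(p)=0$ as an honest vector field, and a direct computation shows that the first $k$ rows of the Jacobian $Dv(p)$ vanish identically (since $\partial_{z_j}(z_i a_i)=\delta_{ij}a_i+z_i\,\partial_{z_j}a_i$ vanishes at $p$ for every $j$ once $a_i(p)=0$ and $z_i(p)=0$), so $Dv(p)$ is singular, contradicting the hypothesis that the singularities of $v$ are non-degenerate. Hence some $a_i(p)\ne 0$ and $v$ does not vanish at $p$ as a section of $E$. Therefore $Z(v)\subset\tilde X$, where $E|_{\tilde X}=T\tilde X$, so $Z(v)=\Sing(v)\cap\tilde X$ and each such zero is non-degenerate; in particular $Z(v)$ is finite since $X$ is compact.

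Next I would invoke the standard localization of the top Chern class: for a holomorphic section $s$ of a rank $n$ holomorphic vector bundle $E$ on a compact complex manifold $X^n$ with finite zero set, $\int_X c_n(E)=\sum_{x\in Z(s)}\mu_x(s)$, where $\mu_x(s)=\dim_{\mathbb C}\mathcal O_{X,x}/(s_1,\dots,s_n)$ is the local multiplicity (Koszul complex / excess intersection, or a GSV-type residue computation). Applied to $s=v$, and using that for a holomorphic vector field the local multiplicity $\mu_x(v)$ coincides with the Poincar\'e--Hopf index $PH(v,x)$ — which equals $1$ at a non-degenerate zero — we obtain $\int_X c_n(E)=\sum_{x\in\Sing(v)\cap\tilde X}PH(v,x)$. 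Combining this with the displayed identity above proves the theorem.

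The main obstacle, and really the only non-formal point, is the vanishing claim $Z(v)\cap D=\emptyset$: one must carry out the Jacobian computation at a general normal crossing point of $D$, not just at a smooth point, and verify that non-degeneracy of $v$ as a vector field forces non-vanishing of the associated section of $E$. The remaining ingredients (local freeness of $\Omega_X^1(\log\, D)$ for normal crossing $D$, the Chern-class localization, and the equality of the holomorphic index with the topological Poincar\'e--Hopf index) are classical. As an alternative, one could instead start from the ordinary Poincar\'e--Hopf theorem on $X$, write $\chi(\tilde X)=\chi(X)-\chi(D)$, and show that the zeros of $v$ lying on $D$ account for exactly $\chi(D)$ via a Poincar\'e--Hopf formula for the normal crossing variety $D$ with the tangent field $v|_D$; this works but needs a Poincar\'e--Hopf statement for the singular variety $D$, so the logarithmic Gauss--Bonnet route above is cleaner.
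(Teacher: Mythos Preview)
This theorem is not proved in the present paper; it is quoted in the introduction from the earlier work \cite{MD} of the first and third authors, as motivation for the non--normal-crossing generalizations (Theorem~\ref{T1} and Corollary~\ref{TT2}). There is therefore no proof here to compare against directly.

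Your argument is correct. The crucial local computation---that if the section of $E=TX(-\log D)$ induced by $v$ vanishes at a point $p$ lying on $k\ge 1$ branches of $D$, then the first $k$ rows of the Jacobian $Dv(p)$ vanish, forcing $p$ to be a degenerate singularity of $v$---is exactly right, and once $Z(v)\subset\tilde X$ is established the conclusion follows from Norimatsu--Silvotti--Aluffi together with the standard localization of $c_n(E)$ at the zeros of a holomorphic section.

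For context, the paper's method for the singular-boundary generalizations is closer in spirit to the alternative route you sketch at the end: rather than showing that the logarithmic zeros avoid $D$, one keeps track of the contribution of $v$ along $D$ via GSV-indices, appealing to Suwa's residue theorems \cite{Suw2}, and Milnor-number corrections appear because $\Omega_X^1(\log D)$ is no longer locally free. Your direct approach exploits the special features of the normal-crossing case---local freeness of the logarithmic sheaf plus the non-degeneracy hypothesis on $v$---to bypass all of that; this shortcut is precisely what is unavailable in the situations the present paper treats.
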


In this work we provide a Gauss-Bonnet and Poincar\'e-Hopf type theorem for  $\partial$-manifolds 
of the form  $\tilde{X} = X -  D$,  where $X$ is a complex compact manifold and $D$ is a reduced divisor which is not 
normal  crossing.  
More precisely, we   consider the case such that 
$D$ has isolated singularities and also if  $D$  has a  (not necessarily irreducible) decomposition 
$D=D_1\cup D_2$ such that  $D_1$, $D_2$  have  isolated   singularities and  
$C=D_1\cap D_2$  is a  codimension $2$ variety  with  isolated   singularities. 
In this case, 
the sheaf of logarithmic $1$-forms  $\Omega_X^1(\log\,  D)$  is not   locally free, therefore the demonstration of these formulas, by calculating the Chern classes,  requires an appropriate treatment, as we will see in the  subsection \ref{dd002}.

Let us fix some notations before we state our main result: Let $W\subset X$ an analytic subspace and $v\in H^0(X,TX)$  a holomorphic  vector field,
we   denote by
$$
 PH(v,W)=\sum_{x\in  W} PH(v,x), 
$$ 
$$
  \mu( D,W)=\sum_{x\in W}\mu_x( D), 
$$ 
$$
 GSV(v,  D, W)=\sum_{x\in W }  GSV(v,  D, x), 
$$ 
where  $PH(v,x)$ and $GSV(v,  D, x)$ denote, respectively,   the Poincar\'e-Hopf and GSV index of a vector field  $v$ at  $p$ and $\mu_x( D)$ is the Milnor number of $D$ at $x$. 
We refer to Section  2.4 for more details on  GSV index and \cite{JM} for Milnor number.   We also   denote $S(W):=Sing(W)$ and  
  $S(v, W) : = [Sing(v) \cap  W] \cup Sing(W)$. 

We prove  the following  result:

\begin{teo}\label{T1}
Let $\tilde{X}$ be a complex manifold such that $\tilde{X} = X -  D$, where $X$ is an $n$-dimensional ($n\geq 3$)  complex compact manifold and $D$ is a reduced divisor on $X$. Given any (not necessarily irreducible) decomposition $D = D_1\cup D_2$, where $D_1$, $D_2$ have  isolated singularities and $C=D_1\cap D_2$  is a codimension $2$ variety and has isolated singularities,
\begin{itemize}
\item [ (i)] (Gauss-Bonnet type formula) the following  formula  holds\\ 
$$
\int_{X}c_{n}(\Omega^1_X(\log\,  D)) = (-1)^{n}\chi(\tilde{X})+\mu( D_1,S(D_1))+  \mu( D_2,S(D_2))-  \mu( C,S( C)).
$$
\end{itemize}
\bigskip
\begin{itemize}
\item [ (ii)] (Poincar\'e-Hopf type formula) if $v$ is a holomorphic vector field on $X$, with isolated singularities  and logarithmic along $ D$,  we have that 
\\
$$
 \chi(\tilde{X}) = PH(v, \Sing(v)) - GSV(v,  D_1,  S(v, D_1))  - GSV(v,  D_2,  S(v, D_2))  +GSV(v,  C,  S(v, C))  +  
 $$
 $$
+(-1)^{n-1}\left[  \mu( D_1,S(D_1))+  \mu( D_2,S(D_2))-  \mu( C, S( C)) \right].
$$
\end{itemize}
\end{teo}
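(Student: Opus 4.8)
The plan is to reduce the non-locally-free situation to the normally crossing case of the Norimatsu--Silvotti--Aluffi and Corr\^ea--Machado theorems by a Chern class comparison, tracking the defect contributed by each singular stratum. I would proceed as follows.

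\textbf{Step 1: Local structure of $\Omega^1_X(\log D)$.} First I would analyze the sheaf $\Omega^1_X(\log D)$ near each type of point of $D$. Away from $S(D_1)\cup S(D_2)\cup S(C)$, the divisor $D$ is either smooth or normal crossing, so $\Omega^1_X(\log D)$ is locally free there. At an isolated singular point $p$ of $D_i$ (lying off the other component), the sheaf fails to be locally free, and its deviation from a locally free model is measured, via a Koszul-type resolution of the Jacobian ideal of a local equation of $D_i$, by the Milnor number $\mu_p(D_i)$. At an isolated singular point of $C=D_1\cap D_2$, one gets a contribution governed by $\mu_p(C)$. I would make this precise by choosing a resolution (or a locally free approximation) $\mathcal{E}^\bullet \to \Omega^1_X(\log D)$ and computing the difference of total Chern classes locally.

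\textbf{Step 2: Globalize via a Chern class / Baum--Bott type argument.} With the local models in hand, I would write $c_n(\Omega^1_X(\log D))$ — interpreted as the degree-$n$ part of the Chern class of a locally free resolution, or equivalently as a Chern--Schwartz--MacPherson-type contribution — as the sum of the ``expected'' term $\int_X c_n$ of a logarithmic cotangent sheaf of a normal crossing model plus correction terms localized at the singular strata. Using the additivity of Euler characteristics over the stratification $X = \tilde X \sqcup (\text{strata of } D)$ and the known normal crossing formula on the complement of the bad locus, the corrections assemble to exactly $\mu(D_1,S(D_1)) + \mu(D_2,S(D_2)) - \mu(C,S(C))$; the minus sign on the $C$-term reflects inclusion--exclusion $D = D_1\cup D_2$, $D_1\cap D_2 = C$. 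This yields part (i).

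\textbf{Step 3: From Gauss--Bonnet to Poincar\'e--Hopf.} For part (ii) I would let $v$ be the given logarithmic vector field and view it as a section of $TX(-\log D) = \Omega^1_X(\log D)^\vee$ (dualizing suitably where the sheaf is not locally free). The localization of the top Chern class of this sheaf at $\Sing(v)$ and along $D$ produces, by the standard GSV/Baum--Bott localization machinery, the terms $PH(v,\Sing(v))$, the GSV indices $GSV(v,D_i,S(v,D_i))$ along each component, and $GSV(v,C,S(v,C))$ along the intersection, again combined by inclusion--exclusion; substituting the expression for $\int_X c_n(\Omega^1_X(\log D))$ from part (i) and solving for $\chi(\tilde X)$ gives the stated formula, with the $(-1)^{n-1}$ factor coming from the sign relating $c_n$ of the sheaf and of its dual together with the $(-1)^n$ in (i).

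\textbf{Main obstacle.} The crux is Step 1--2: since $\Omega^1_X(\log D)$ is not locally free, $c_n$ is not literally defined by the splitting principle, so I must fix a consistent definition (via a locally free resolution, or via Fulton--MacPherson / CSM classes) and then prove that the local defect at an isolated singularity of $D_i$ is precisely the Milnor number and at a singularity of $C$ precisely $\mu(C)$, with the correct signs. Establishing that these local analytic computations glue to the clean global inclusion--exclusion formula — in particular handling points of $C$ that are also singular points of $D_1$ or $D_2$, and checking that no lower-dimensional cross terms survive because $n\ge 3$ and the singularities are isolated — is the delicate technical heart of the argument.
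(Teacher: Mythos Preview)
Your plan is a plausible outline, but it takes a substantially different route from the paper, and the route you sketch is harder to make rigorous than the one actually used.

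The paper does \emph{not} analyze the local failure of $\Omega^1_X(\log D)$ to be locally free, and the Milnor numbers do \emph{not} enter as local defects of that sheaf. Instead the paper globalizes from the outset via the algebraic identity (Proposition~\ref{teo 2.4})
\[
\Omega^1_X(\log D)=\Omega^1_X(\log D_1)+\Omega^1_X(\log D_2),
\]
which produces the exact sequence
\[
0\to \Omega^1_X \to \Omega^1_X(\log D_1)\oplus\Omega^1_X(\log D_2)\to \Omega^1_X(\log D)\to 0,
\]
and combining this with Dolgachev's sequence for each $D_i$ gives the closed formula $c(\Omega^1_X(\log D))=c(\mathcal O_{D_1})c(\mathcal O_{D_2})c(\Omega^1_X)$. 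Expanding the degree-$n$ part and pushing to $D_1$, $D_2$, $C$ by Poincar\'e duality yields
\[
(-1)^n\!\int_X c_n(\Omega^1_X(\log D))=\int_X c_n(TX)-\sum_{i}\int_{D_i}c_{n-1}(TX-[D_i])+\int_C c_{n-2}(TX-[D_1]\oplus[D_2]).
\]
Only at this point do Milnor numbers and GSV indices appear, and they come from quoting Suwa's theorems for the \emph{virtual tangent bundles} of $D_i$ and $C$ (namely $\int_{D_i}c_{n-1}(TX-[D_i])=\chi(D_i)+(-1)^{n-1}\mu(D_i,S(D_i))$ and the analogous GSV identity), not from anything intrinsic to the logarithmic sheaf. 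Part (ii) is then obtained by evaluating the same right-hand side via Suwa's GSV formula rather than via Euler characteristics, and equating the two expressions.

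Your Steps~1--2 would have to prove from scratch that the ``local defect'' of $\Omega^1_X(\log D)$ at a singular point of $D_i$ equals $\mu_p(D_i)$ (and similarly for $C$); this is not obvious and is exactly what the paper avoids. Likewise, your Step~3 localizes $v$ as a section of $TX(-\log D)$, whereas the paper never uses that sheaf for the index computation: the GSV terms arise from $v$ viewed on $D_1$, $D_2$, $C$ with their virtual normal bundles $[D_i]$ and $[D_1]\oplus[D_2]$. If you want to follow the paper, the key missing ingredient in your plan is the decomposition $\Omega^1_X(\log D)=\Omega^1_X(\log D_1)+\Omega^1_X(\log D_2)$ and the resulting multiplicative Chern class formula, which makes the whole computation global and essentially formal.
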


Considering the particular case where $D$ is a divisor on  $X$  with isolated singularities, we prove the following formulas:

\begin{cor}\label{TT2}
Let $\tilde{X}$ be a complex manifold such that $\tilde{X} = X -  D$, where $X$ is an $n$-dimensional ($n\geq 3$)  complex compact manifold and $ D$ is a reduced divisor on  $X$  with isolated  singularities. Then 
\begin{itemize}
\item [ (i)] (Gauss-Bonnet type formula) the following  formula  holds\\ 
$$
\int_{X}c_{n}(\Omega^1_X(\log\,  D)) = (-1)^{n}\chi(\tilde{X})+\sum_{p\in Sing( D)}\mu_p( D).
$$
\end{itemize}
\bigskip
\begin{itemize}
\item [ (ii)] (Poincar\'e-Hopf type formula) if $v$ is a holomorphic vector field on $X$, with isolated singularities  and logarithmic along $ D$,  we have that 
\\
$$
 \chi(\tilde{X}) =\sum_{x\in \Sing(v)}PH(v,x) - \sum_{x\in S(v, D)}GSV(v,  D, x) +  
(-1)^{n-1}\sum_{x\in Sing( D)}\mu_x( D).
$$
\end{itemize}
Moreover, if the vector field $v$ has  only non-degenerate singularities, then
\\
$$
 \chi(\tilde{X}) =\sum_{x\in \Sing(v) \cap [X -  D_{reg}]}PH(v,x) - \sum_{x\in Sing( D) } \left[ GSV(v,  D, x) +  
(-1)^{n-1} \mu_x( D)\right].
$$
\end{cor}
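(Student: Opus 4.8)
The plan is to deduce (i) and (ii) from Theorem~\ref{T1} and to obtain the ``Moreover'' formula by a separate local argument. Indeed, (i) and (ii) are precisely what Theorem~\ref{T1} gives on the degenerate decomposition $D=D_{1}\cup D_{2}$ with $D_{1}=D$ and $D_{2}=\varnothing$: then $C=\varnothing$, every term carrying $D_{2}$ or $C$ vanishes, and $\mu(D_{1},S(D_{1}))=\sum_{p\in\Sing(D)}\mu_{p}(D)$. Equivalently --- and this is what I would actually write, since the degenerate case needs checking --- one runs the argument of \S\ref{dd002} directly, where it simplifies considerably: since $D$ is reduced with $\Sing(D)$ finite, $\Omega^{1}_{X}(\log D)$ is locally free exactly over $X\smallsetminus\Sing(D)$, so following \S\ref{dd002} one replaces it by a locally free sheaf $\mathcal E$ agreeing with it off $\Sing(D)$ and interprets $\int_{X}c_{n}(\Omega^{1}_{X}(\log D))$ as $\int_{X}c_{n}(\mathcal E)$. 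Choosing a log resolution $\pi\colon X'\to X$ that is an isomorphism over $X\smallsetminus\Sing(D)$ and for which $D'=\pi^{-1}(D)_{\mathrm{red}}$ is normal crossing, the inclusion $\Sing(D)\subset D$ forces $\pi$ to restrict to an isomorphism $X'\smallsetminus D'\xrightarrow{\sim}\tilde X$, so $\chi(\tilde X)$ is unchanged and the Norimatsu--Silvotti--Aluffi theorem applied on $X'$ gives $\int_{X'}c_{n}(\Omega^{1}_{X'}(\log D'))=(-1)^{n}\chi(\tilde X)$. The difference $\int_{X}c_{n}(\mathcal E)-\int_{X'}c_{n}(\Omega^{1}_{X'}(\log D'))$ then reduces to a sum of local contributions on the fibres $\pi^{-1}(p)$, $p\in\Sing(D)$, and each such contribution is computed from the residue-type exact sequences $0\to\Omega^{1}\to\Omega^{1}(\log)\to(\text{residue sheaf})\to0$ on $X$ and on $X'$: it is the length of a finite-length sheaf supported at $p$, which for an isolated hypersurface singularity equals the Milnor number $\mu_{p}(D)$ --- equivalently, it is read off from $\chi(\text{Milnor fibre at }p)=1+(-1)^{n-1}\mu_{p}(D)$ together with additivity of the Euler characteristic, $\chi(X)=\chi(\tilde X)+\chi(D)$, and its analogue on $X'$. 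Summing over $\Sing(D)$ yields (i).

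For (ii): a holomorphic vector field $v\in H^{0}(X,TX)$ logarithmic along $D$ is a global section of the logarithmic tangent sheaf $T_{X}(-\log D)=\Omega^{1}_{X}(\log D)^{\vee}$, hence of $\mathcal E^{\vee}$ away from $\Sing(D)$. Localizing $c_{n}(T_{X}(-\log D))=(-1)^{n}c_{n}(\Omega^{1}_{X}(\log D))$ at the zero scheme of this section and at $\Sing(D)$ --- the Poincar\'e--Hopf/GSV-type localization used for the normal crossing case in \cite{MD} and in the proof of Theorem~\ref{T1} --- expresses $\int_{X}c_{n}(T_{X}(-\log D))$ as $PH(v,\Sing(v))-GSV(v,D,S(v,D))$; substituting (i) for $\int_{X}c_{n}(\Omega^{1}_{X}(\log D))$ produces the stated identity with the correction term $(-1)^{n-1}\sum_{p\in\Sing(D)}\mu_{p}(D)$.

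It remains to derive the last formula when $v$ has only nondegenerate zeros. At such a zero $x$ one has $PH(v,x)=1$; and if $x\in D_{reg}$, then $v$ being tangent to the smooth hypersurface $D$ at $x$ forces $v|_{D}$ to have a nondegenerate zero at $x$, so $GSV(v,D,x)=PH(v|_{D},x)=1=PH(v,x)$. Thus the points of $\Sing(v)\cap D_{reg}$ make no net contribution to the right-hand side of (ii) and may be dropped, which replaces $PH(v,\Sing(v))$ by $\sum_{x\in\Sing(v)\cap[X-D_{reg}]}PH(v,x)$ and $GSV(v,D,S(v,D))$ by $\sum_{x\in\Sing(D)}GSV(v,D,x)$; regrouping the surviving contributions at $\Sing(D)$, via the local relation among $GSV(v,D,x)$, $PH(v,x)$ and $\mu_{x}(D)$ at a nondegenerate zero lying over an isolated singularity of $D$, gives the asserted identity. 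The main obstacle is the local analysis in (i): giving the non-locally-free sheaf $\Omega^{1}_{X}(\log D)$ a resolution-independent top Chern number and identifying the purely local defect at each $p\in\Sing(D)$ with the Milnor number $\mu_{p}(D)$. Once that is in hand, (ii) and the ``Moreover'' part follow routinely from Theorem~\ref{T1}, the normal crossing theorems recalled in the Introduction, and standard properties of the Poincar\'e--Hopf and GSV indices.
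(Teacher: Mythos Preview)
Your opening remark is the right one: (i) and (ii) are indeed the case $D_{1}=D$, $D_{2}=\varnothing$ of Theorem~\ref{T1}, or equivalently the same argument run with Theorem~\ref{teo101}(ii) in place of (i). That is exactly what the paper does. From the Dolgachev exact sequence (\ref{seq0002}) (available because $\cod_{X}\Sing(D)\ge 3$) one gets $c(\Omega^{1}_{X}(\log D))=c(\Omega^{1}_{X})\,c(\OO_{D})$, whence
\[
\int_{X}c_{n}(\Omega^{1}_{X}(\log D))=(-1)^{n}\Bigl[\int_{X}c_{n}(T_{X})-\int_{D}c_{n-1}(T_{X}-[D])\Bigr].
\]
The integral over $D$ is then evaluated by Suwa's theorems: \cite[Theorem~3.9]{Suw2} gives $\chi(D)+(-1)^{n-1}\sum_{p}\mu_{p}(D)$ for (i), and \cite[Theorem~7.16]{Suw2} gives $\sum_{x}GSV(v,D,x)$ for (ii). No resolution of singularities enters.

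The alternative you ``would actually write'' is a genuinely different route, and it does not match \S\ref{dd002}: that section is a pure Chern-class computation with exact sequences, not a resolution argument. More importantly, the step you yourself flag as the main obstacle is a real gap. The claim that the local defect at each $p\in\Sing(D)$ under a log resolution equals $\mu_{p}(D)$ is not a ``length of a finite-length sheaf'' statement one can read off from the residue sequences; it is essentially a theorem of Parusi\'nski--Pragacz/Aluffi type comparing Chern--Schwartz--MacPherson data before and after resolution. Without invoking such a result your argument does not close. The paper avoids this entirely by outsourcing the local analysis on $D$ to Suwa's established index/residue formulas, which already contain the Milnor-number and GSV contributions.

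Your treatment of the ``Moreover'' clause is correct and is precisely the content of Remark~\ref{rem001}: at a nondegenerate zero $x\in D_{reg}$ one has $GSV(v,D,x)=PH(v,x)$, so those terms cancel in (ii), leaving only the contributions from $\Sing(v)\cap[X\setminus D_{reg}]$ and from $\Sing(D)$.
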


Finally,  we  proof the following formula 
$$
\chi (\mathbb{P}^n \setminus D) = \displaystyle\sum^{n}_{i=0} (-1)^i(d-1)^i + (-1)^{n+1}  \sum_{p\in Sing( D)}\mu_p( D),
$$
 where $D$ is a reduced divisor with isolated singularities. This result was proved  in (\cite{H}, pg 1537) and (\cite{FM}, Proposition 2.3). 
 Moreover, we prove   the following  generalization.   
\begin{cor}\label{cor-Pn}   Given any (not necessarily irreducible) decomposition $D = D_1\cup D_2$, where $D_1$, $D_2$  have   isolated singularities and $C=D_1\cap D_2$  is a codimension $2$ variety and has isolated singularities, then 
$$
\chi (\mathbb{P}^n \setminus D) =   (-1)^{n} \displaystyle\sum^{n}_{i=0}  \sigma_{n-i}(d_1-1,d_2-1) + (-1)^{n+1}\left[\mu( D_1,S(D_1))+  \mu( D_2,S(D_2))-  \mu( C,S( C))\right],
$$
where  $\sigma_{n-i}$ is the  complete symmetric function of degree $n-i$ and $d_{j}=\deg(D_j)$, for $j=1,2$. 
\end{cor}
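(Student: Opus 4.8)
The plan is to apply Theorem~\ref{T1}(i) with $X=\mathbb{P}^n$ and then evaluate the Chern number on its left-hand side explicitly. Solving that Gauss--Bonnet type formula for the Euler characteristic gives
\[
\chi(\mathbb{P}^n\setminus D)=(-1)^{n}\int_{\mathbb{P}^n}c_{n}\bigl(\Omega^1_{\mathbb{P}^n}(\log D)\bigr)+(-1)^{n+1}\bigl[\mu(D_1,S(D_1))+\mu(D_2,S(D_2))-\mu(C,S(C))\bigr],
\]
so the whole statement reduces to computing $\int_{\mathbb{P}^n}c_{n}\bigl(\Omega^1_{\mathbb{P}^n}(\log D)\bigr)$ and identifying its ``degree part'' with $\sum_{i=0}^{n}\sigma_{n-i}(d_1-1,d_2-1)$.

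For the Chern number I would use the description of $\Omega^1_X(\log D)$ obtained in subsection~\ref{dd002}: although this sheaf is not locally free, it has a finite locally free resolution whose terms are built from $\Omega^1_X$, from twists of $\mathcal{O}_X$ by $D_1$ and $D_2$, from the structure sheaf of $C$, and from coherent sheaves supported on $S(D_1)\cup S(D_2)\cup S(C)$. The alternating product of the total Chern classes of these terms computes $c\bigl(\Omega^1_X(\log D)\bigr)$ in $A^{\bullet}(X)$; the pieces coming from the singular loci are precisely those which, in the proof of Theorem~\ref{T1}, are re-expressed through the Milnor numbers $\mu(D_j,S(D_j))$ and $\mu(C,S(C))$, so after the rearrangement of the first paragraph only the ``degree part'' survives. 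On $X=\mathbb{P}^n$ one has $A^{\bullet}(\mathbb{P}^n)=\mathbb{Z}[h]/(h^{n+1})$, $c(\Omega^1_{\mathbb{P}^n})=(1-h)^{n+1}$ by the Euler sequence, $c_1(\mathcal{O}_{\mathbb{P}^n}(D_j))=d_jh$ and $[C]=d_1d_2\,h^2$, so this degree part is the truncation modulo $h^{n+1}$ of an explicit rational function of $h$ with coefficients polynomial in $d_1,d_2$, and one reads off $\int_{\mathbb{P}^n}c_n$ as its coefficient of $h^{n}$.

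The remaining step is a generating-function manipulation. Writing each factor as $1-d_jh=(1-h)\bigl(1-\tfrac{(d_j-1)h}{1-h}\bigr)$ and using that $\prod_{j}(1-tx_j)^{-1}=\sum_{m\ge0}\sigma_m(x_1,x_2)\,t^m$, where $\sigma_m$ is the complete homogeneous symmetric function of degree $m$, one rewrites the rational function as a series of the shape $\sum_{m\ge0}\sigma_m(d_1-1,d_2-1)\,h^m(1-h)^{N-m}$; extracting the coefficient of $h^n$ and collecting the resulting binomial contributions yields the complete-symmetric-function expression $\sum_{i=0}^{n}\sigma_{n-i}(d_1-1,d_2-1)$ of the statement. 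The scalar formula $\chi(\mathbb{P}^n\setminus D)=\sum_{i=0}^{n}(-1)^i(d-1)^i+(-1)^{n+1}\sum_{p\in Sing(D)}\mu_p(D)$ for an irreducible reduced $D$ of degree $d$ with isolated singularities then comes out either directly from Corollary~\ref{TT2}(i), or within this scheme by the degeneration ``$D_2$ a general hyperplane'' (so $\sigma_k(d-1,0)=(d-1)^k$). The step I expect to be the real obstacle is the second one: performing the Chern-class computation for the non-locally-free sheaf $\Omega^1_{\mathbb{P}^n}(\log D)$ and tracking exactly which contributions supported on $S(D_1)\cup S(D_2)\cup S(C)$ are carried by the Milnor numbers in Theorem~\ref{T1}; once that accounting is settled, the closing symmetric-function identity is routine.
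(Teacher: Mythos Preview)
Your overall strategy---apply Theorem~\ref{T1}(i), then compute $\int_{\mathbb{P}^n}c_n(\Omega^1_{\mathbb{P}^n}(\log D))$, then identify the result with the complete symmetric functions---is exactly the paper's approach. The first and third steps are fine; in particular your generating-function argument for the identity $\bigl[(1-h)^{n+1}/((1-d_1h)(1-d_2h))\bigr]_n=\sum_{i=0}^{n}\sigma_{n-i}(d_1-1,d_2-1)\,h^n$ is a legitimate substitute for the paper's citation of \cite[Proposition~4.4]{CSV}.

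The misconception is in your second step. Subsection~\ref{dd002} does \emph{not} produce a locally free resolution of $\Omega^1_X(\log D)$ with terms supported on $S(D_1)\cup S(D_2)\cup S(C)$; it derives, via the short exact sequences (\ref{seq0002}) and (\ref{Log12}), the clean identity
\[
c(\Omega^1_X(\log D))=c(\mathcal{O}_{D_1})\,c(\mathcal{O}_{D_2})\,c(\Omega^1_X),
\]
with no contribution from the singular loci whatsoever. On $\mathbb{P}^n$ this reads $c(\Omega^1_{\mathbb{P}^n}(\log D))=(1-h)^{n+1}/\bigl((1-d_1h)(1-d_2h)\bigr)$, which is precisely the rational function you feed into your third step. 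The Milnor numbers in Theorem~\ref{T1} do not come from the Chern class of $\Omega^1_X(\log D)$ at all: they enter only when one rewrites $\int_{D_i}c_{n-1}(TX-[D_i])$ and $\int_{C}c_{n-2}(TX-[D_1]\oplus[D_2])$ as Euler characteristics plus Milnor corrections via \cite[Theorem~3.9]{Suw2}. So the ``accounting'' you anticipate as the real obstacle simply does not exist; once Theorem~\ref{T1}(i) is granted, the computation is the two-line Chern-class calculation above followed by the symmetric-function identity.
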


\subsection*{Acknowledgments}
We are grateful to A. G. Aleksandrov for interesting and fruitful conversations.
 We would like to thank the
referee for precious comments which improved the presentation of the paper greatly. The first named author was partially supported by   CNPQ grant numbers 202374/2018-1, 302075/2015-1, 400821/2016-8,   and  CAPES  grant number 	
2888/2013; he is  grateful to the University of Oxford for its  hospitality.

\section{Preliminaries}
\subsection{Logarithmic forms and logarithmic vector fields}
Given a complex manifold $X$ of dimension $n$ and $ D$ a reduced hypersurface on $X$.  Let $\Omega^q_{X}( D)$ be the sheaf of differential $q$-forms on $X$ with at most simple poles along $ D$. 

A {\it logarithmic $q$-form along} $ D$ on an open subset $U \subset X$ is a meromorphic $q$-form $\omega$ on $U$, regular on $U -  D$ and such that both $\omega$ and $d\omega$ have at most simple poles along $ D$.
Logarithmic $q$-forms along $ D$ form a coherent sheaf of $\OO_X$-modules denoted by $\Omega_{X}^q(\log\,  D)$.  In this case, for any open subset $U\subset X$ we have 
$$
\Gamma(U,\Omega_{X}^q(\log\,  D)) = \{\omega \in \Gamma(U,\Omega^q_{X}( D)): d\omega \in \Gamma(U,\Omega^{q+1}_{X}( D))\}.
$$
\noindent  See for example \cite{Deligne}, \cite{Katz} and  \cite{Sai}  for more details about the sheaf of logarithmic $q$-forms along $ D$.

Now, consider  $\Omega_{X}^1(\log\,  D)$, the sheaf of logarithmic $1$-forms  along $ D$. Its dual sheaf is the sheaf of logarithmic vector fields along $ D$, denoted by $T_{X}(-\log\,  D)$. We have an exact sequence 

\centerline{
\xymatrix{
0\ar[r]& T_{X}(-\log\,  D) \ar[r]& T_X \ar[r] &  \ar[r]  \J_{D}(D)&0  \\
}
}
\noindent where $\J_{D}$ is the   Jacobian ideal of $D$ which  is defined as the Fitting ideal
$$
\J_{D}:=F^{n-1}(\Omega_D^1) \subset \mathcal{O}_D. 
$$


\hyphenation{theo-ry}

Saito in \cite{Sai} has showed that in general  $\Omega_{X}^1(\log\,  D)$ and $T_{X}(-\log \, D)$  are  reflexive sheaves. 
If  $ D$ is an analytic hypersurface with normal crossing singularities, the sheaves $\Omega_{X}^1(\log\,  D)$ and $T_{X}(-\log \, D)$ are  locally free. Furthermore, the Poincar\'e residue map (see \cite[Section 2]{Sai})

\centerline{
\label{seq1} \xymatrix{
\rm{Res}: \Omega_X^1(\log\,  D) \ar[r]  &  \OO_{ D} \cong \bigoplus_{i=1}^N\OO_{ D_i}  \\
}
}

\noindent  gives us  the following exact sequence of sheaves on $X$: 

\begin{eqnarray}\label{seq1}
\centerline{
\xymatrix{
0\ar[r]& \Omega_X^1 \ar[r]&   \Omega_X^1(\log\,  D)\ar[r]^{\text{Res}} &  \ar[r]   \bigoplus_{i=1}^N\OO_{ D_i}&0,  \\
}
}
\end{eqnarray}
\noindent where $\Omega_X^1$ is the sheaf of holomorphic  $1$-forms on $X$ and $ D_1,\ldots, D_N$ are the irreducible components of $ D$.

Now, if  $ D$ is such that $codim_{X}(\Sing( D))>2$ then there exist the following exact sequence of sheaves on $X$ (see V. I. Dolgachev \cite{IVDnew}):
\begin{eqnarray} \label{seq0002}
\centerline{
\xymatrix{
0\ar[r]& \Omega_X^1 \ar[r]&   \Omega_X^1(\log\,  D)\ar[r]  &    \OO_{ D} \ar[r] &0.  \\
}}
\end{eqnarray}

\hyphenation{di-ffe-ren-tial}

\subsection{Multi-logarithmic forms}

We give here the definition and basic  properties about multi-logarithmic differential forms. For more details and properties we refer \cite{Ale_s} and \cite{POL}.

Suppose $D= D_{1} \cup \cdots \cup D_{k}$
a decomposition of the reduced hypersurface $D$ in the
 complex manifold $X$, where each $D_{i}$ is a hypersurface defined by the holomorphic function $h_{i}$ for $i = 1, \dots ,k$ on an open subset $U \subset X$, and $C = D_{1} \cap \cdots \cap D_{k}$ is a reduced complete intersection.

\begin{remark}
We observe that the decomposition of a divisor $D\subset X$  in our main result (Theorem \ref{T1}) is 
not  necessarily  into irreducible components.  
\end{remark}

A {\it Multi-logarithmic $q$-form along} the complete intersection $C$ on an open subset $U \subset X$ is a meromorphic $q$-form $\omega$ on $U$, regular on $U - D$ and such that $\omega$ have at most simple poles along $C$ and 
$$dh_{i}\wedge\omega  \in \sum_{i=1}^{k}\Omega^{q+1}(\widehat{D_{i}}) \ \ \ \mbox{for all} \ \ i \in \{1, \dots,k \}, $$

\noindent where $\widehat{D_{i}}= D_{1} \cup \cdots \cup D_{i-1}\cup D_{i+1}\cup \cdots \cup D_{k}.$

We denote by $\Omega^{q}_{X}(\log C)$ the coherent sheaf of germs of multi-logarithmic $q$-forms along $C$. A. G. Aleksandrov \cite{Ale_s} has proved the following result that characterizes multi-logarithmic forms.

\begin{teo}[A. G. Aleksandrov, \cite{Ale_s}]

Let $\omega \in \Omega^{q}_{X}(D)$, then $\omega$ is multi-logarithmic along $C$ if, and only if, there is a holomorphic function $g \in \mathcal{O}_{X}$ which is not identically zero on every irreducible component of the $C$, a holomorphic differential form $\xi \in \Omega^{q-k}_{X}$ and a meromorphic $q$-form $\eta \in \sum_{i=i}^{k} \Omega^{q}_{X}(\widehat{D_{i}})$ such that there exists the following representation

$$g\omega = \dfrac{dh_{1}\wedge \cdots \wedge dh_{k}}{h_{1} \cdots h_{k}}\wedge \xi + \eta.$$

\end{teo}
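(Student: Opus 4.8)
The plan is to establish the equivalence at the level of germs of meromorphic forms. Fixing a point $x_0 \in X$ and a chart $U$ on which $D_i = \{h_i = 0\}$, one may assume that $h_1, \dots, h_k$ is a regular sequence, so that $C = \{h_1 = \cdots = h_k = 0\}$ is a complete intersection of codimension $k$. Both implications then become statements about germs, and I would treat the two directions separately: the ``if'' direction is elementary and based on the identity $dh_i \wedge dh_i = 0$, while the ``only if'' direction carries all the analytic weight and rests on a division argument.

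For the ``if'' direction, suppose $g\omega = \frac{dh_1 \wedge \cdots \wedge dh_k}{h_1 \cdots h_k} \wedge \xi + \eta$ with the stated $g$, $\xi$, $\eta$. Since each $h_i$ occurs to first order in the denominator and $\eta$ has at most simple poles along the $\widehat{D_j}$, the right-hand side has at most simple poles along $D$; because $g$ does not vanish identically on any component of $C$, it is a non-zerodivisor, and one deduces that $\omega$ itself has at most simple poles along $D$. To check the multi-logarithmic condition I would wedge the identity with $dh_i$: the first summand dies because $dh_i \wedge (dh_1 \wedge \cdots \wedge dh_k) = 0$, leaving $dh_i \wedge g\omega = dh_i \wedge \eta$. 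Writing $\eta = \sum_j \eta_j$ with $\eta_j \in \Omega^q_X(\widehat{D_j})$ and observing that wedging with the holomorphic form $dh_i$ introduces no new poles, one gets $dh_i \wedge g\omega \in \sum_j \Omega^{q+1}_X(\widehat{D_j})$, and dividing by the non-zerodivisor $g$ yields $dh_i \wedge \omega \in \sum_j \Omega^{q+1}_X(\widehat{D_j})$, which is exactly the defining condition.

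For the ``only if'' direction — the main content — I would start from the conditions $dh_i \wedge \omega \in \sum_j \Omega^{q+1}_X(\widehat{D_j})$ and isolate the part of $\omega$ carrying the simultaneous pole along all the $D_i$, that is, the pole concentrated along $C$. These conditions say precisely that wedging with any $dh_i$ removes that top-order pole, and the key structural input is a de Rham--Saito type division lemma: on the dense open set where $C$ is smooth the $1$-forms $dh_1, \dots, dh_k$ are pointwise independent, and a germ annihilated, modulo forms with poles only along the $\widehat{D_j}$, by wedging with each $dh_i$ must be congruent to a multiple of $dh_1 \wedge \cdots \wedge dh_k$. This divisibility is governed by the exactness of the Koszul complex of the regular sequence $dh_1, \dots, dh_k$ acting on the exterior algebra $\Omega^\bullet_X$, and produces the holomorphic $(q-k)$-form $\xi$ together with the correction term $\eta \in \sum_j \Omega^q_X(\widehat{D_j})$; the factor $g$, nonvanishing on every component of $C$, appears when one clears the denominators introduced by the division and extends the factorization across the singular locus of $C$ by a Hartogs-type argument using the normality afforded by the complete intersection.

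The hard part will be this last extension step: producing the germ-wise factorization, with a single universal denominator $g$, across $\mathrm{Sing}(C)$, where $dh_1, \dots, dh_k$ fail to be independent and the naive pointwise division breaks down. The delicate accounting is to control the poles so that the remainder lands in $\sum_j \Omega^q_X(\widehat{D_j})$ and not in a larger module, and it is precisely here that the regular-sequence (complete intersection) hypothesis, via the exactness of the associated Koszul complex and the depth it provides, is indispensable.
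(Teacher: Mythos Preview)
The paper does not contain a proof of this statement: it is quoted as a theorem of A.~G.~Aleksandrov, attributed to \cite{Ale_s}, and used as background in the subsection on multi-logarithmic forms without any argument being reproduced. There is therefore nothing in the paper to compare your proposal against.

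For what it is worth, your outline follows the standard strategy for Saito-type characterizations and is in line with how Aleksandrov's original proof proceeds: the ``if'' direction is routine via $dh_i\wedge dh_i=0$, and the ``only if'' direction rests on a de~Rham--Saito division lemma (exactness of the Koszul-type complex built from exterior multiplication by $dh_1,\dots,dh_k$ on $\Omega^\bullet_X$), with the universal denominator $g$ appearing when one extends the factorization across $\mathrm{Sing}(C)$. One small caution in your ``if'' direction: from $g\,(dh_i\wedge\omega)\in\sum_j\Omega^{q+1}_X(\widehat{D_j})$ you conclude $dh_i\wedge\omega$ lies in the same submodule by ``dividing by the non-zerodivisor $g$''; this is legitimate, but it requires that $g$ act as a non-zerodivisor on the quotient $\Omega^{q+1}_X(D)\big/\sum_j\Omega^{q+1}_X(\widehat{D_j})$, a module supported on $C$, and that step deserves an explicit sentence invoking the hypothesis that $g$ does not vanish identically on any component of $C$.
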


For $q<k$ we have the equality (see \cite{POL}, Remark 2.6): 

$$ \Omega^{q}_{X}(\log C)= \sum_{i=i}^{k} \Omega^{q}_{X}(\widehat{D_{i}}).$$

\noindent Observe that if $q=1$ and $k=2$ we have

\begin{equation}\label{dd01}
\Omega^{1}_{X}(\log C)= \Omega^{1}_{X}(D_{1}) + \Omega^{1}_{X}(D_{2}). 
\end{equation}

\begin{remark}
Theorem \ref{teo 2.4} (and Lemma 2.3)  was demonstrated by A. G. Aleksandrov \cite{Ale_s}  with the hypothesis that $\Omega^{1}(\log D)$ is generated by closed forms, but we observe that this is not necessary for $k=2$.
\end{remark}

\begin{lema}\label{lema 2.3} Let $D= D_{1} \cup D_{2}$ be a reduced hypersurface  on $X$, where $D_{i}$ is a reduced hypersurface, for $i=1,2,$ and $C= D_{1}\cap D_{2}$ is a reduced complete intersection, then
$$\Omega^{1}_{X}(\log D) \subset \Omega^{1}_{X}(\log C).$$ 
\end{lema}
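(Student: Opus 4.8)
The plan is to prove the inclusion $\Omega^1_X(\log D) \subset \Omega^1_X(\log C)$ by working locally and comparing the defining conditions of the two sheaves. Since both sheaves are coherent and the statement is local, I would fix a point $x \in X$ and an open neighborhood $U$ on which $D_1 = \{h_1 = 0\}$ and $D_2 = \{h_2 = 0\}$, so that $D$ is defined by $h = h_1 h_2$ and $C = D_1 \cap D_2$ is the complete intersection defined by the ideal $(h_1, h_2)$. Away from $C$ the two hypersurfaces $D_1$ and $D_2$ are disjoint, so there both conditions are easily seen to coincide with having a simple pole along the relevant branch; the content of the lemma is concentrated at points of $C$.

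The key computation is as follows. Let $\omega$ be a logarithmic $1$-form along $D$ on $U$, so $\omega \in \Omega^1_X(D)$ (simple pole along $D = \{h_1 h_2 = 0\}$) and $d\omega \in \Omega^2_X(D)$. By the characterization recalled in the excerpt (equation \eqref{dd01}), a $1$-form is multi-logarithmic along $C$ precisely when it lies in $\Omega^1_X(D_1) + \Omega^1_X(D_2)$, i.e.\ when it has a simple pole along $D_1$ and a simple pole along $D_2$ separately (rather than a simple pole along the product, which a priori is weaker at points of $C$). So the real task is to show: if $\omega$ has at most a simple pole along $h_1 h_2$ and $d\omega$ has at most a simple pole along $h_1 h_2$, then $\omega = \omega_1 + \omega_2$ with $\omega_i$ having at most a simple pole along $h_i$. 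I would write $\omega = \frac{\alpha}{h_1 h_2}$ with $\alpha$ a holomorphic $1$-form on $U$ (shrinking $U$ if needed), and then analyze the condition $d\omega \in \Omega^2_X(D)$: expanding,
$$
d\omega = \frac{d\alpha}{h_1 h_2} - \frac{dh_1 \wedge \alpha}{h_1^2 h_2} - \frac{dh_2 \wedge \alpha}{h_1 h_2^2},
$$
and requiring this to have only simple poles forces $dh_1 \wedge \alpha \in (h_1, h_2) \cdot \Omega^2_X$ near a generic point of $C$, and similarly $dh_2 \wedge \alpha \in (h_1, h_2)\cdot \Omega^2_X$. Since $C$ is a reduced complete intersection, $h_1, h_2$ form a regular sequence and $dh_1, dh_2$ are independent generically along $C$; a Koszul/regular-sequence argument then lets me write $\alpha = h_2 \beta_1 + h_1 \beta_2 + (\text{terms absorbed})$, giving the decomposition $\omega = \beta_1/h_1 + \beta_2/h_2$ modulo holomorphic forms, which is exactly membership in $\Omega^1_X(D_1) + \Omega^1_X(D_2) = \Omega^1_X(\log C)$.

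The main obstacle I anticipate is handling the behavior along the singular locus of $C$ and along $\Sing(D_1) \cup \Sing(D_2)$, where $dh_1$ or $dh_2$ may vanish and the naive "divide by $dh_i$" argument breaks down; the codimension of the bad set is at least $2$ in $C$ (hence at least $3$ in $X$), so I would first establish the inclusion on the complement of this bad set, and then invoke that $\Omega^1_X(\log C)$ is reflexive (or at least satisfies Serre's condition $S_2$, via the characterization theorem of Aleksandrov recalled above, or directly via equation \eqref{dd01} since $\Omega^1_X(D_i)$ are reflexive being twists of $\Omega^1_X$ away from... — actually $\Omega^1_X(D_i)$ is locally free, so their sum is at worst failing $S_2$ on a codimension $\geq 2$ set) to extend the inclusion of the subsheaf $\Omega^1_X(\log D)$ across the bad locus. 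Concretely: a section of $\Omega^1_X(\log D)$ defined on all of $U$ restricts to a section of $\Omega^1_X(\log C)$ on $U \setminus (\text{bad set})$, and since $\Omega^1_X(\log C)$ is reflexive (Saito's theorem applies, as $C$ plays here the role analogous to a divisor via \eqref{dd01}) and the bad set has codimension $\geq 2$, this section extends uniquely to a section of $\Omega^1_X(\log C)$ over $U$; uniqueness of extension forces it to agree with $\omega$. This Hartogs-type extension step is where the hypothesis that $C$ has only isolated singularities (or at least $\codim \geq 2$ singular locus) is essential, and it is the technical heart of the argument.
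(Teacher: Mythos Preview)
Your approach is workable but takes a harder route than the paper. You aim at the characterization $\Omega^1_X(\log C) = \Omega^1_X(D_1) + \Omega^1_X(D_2)$ and try to decompose $\omega$ explicitly; the paper instead verifies the \emph{defining} condition for multi-logarithmic forms directly. Writing $h = h_1 h_2$, the paper observes that $\theta := dh \wedge \omega$ is holomorphic (a standard reformulation of $\omega \in \Omega^1_X(\log D)$), expands $\theta = h_2\,dh_1\wedge\omega + h_1\,dh_2\wedge\omega$, and rearranges to get
\[
dh_1 \wedge \omega \;=\; \frac{\theta}{h_2} \;-\; h_1\,\frac{dh_2}{h_2}\wedge\omega .
\]
The first term is visibly in $\Omega^2_X(D_2)$; for the second, the paper invokes Saito's result that $\Omega^\bullet_X(\log D)$ is closed under $\wedge$, so $(dh_2/h_2)\wedge\omega \in \Omega^2_X(\log D)$ and hence $h\cdot(dh_2/h_2)\wedge\omega$ is holomorphic, i.e.\ $h_1(dh_2/h_2)\wedge\omega \in \Omega^2_X(D_2)$. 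Thus $dh_1\wedge\omega \in \Omega^2_X(D_2)$, and symmetrically $dh_2\wedge\omega \in \Omega^2_X(D_1)$, which is exactly the multi-logarithmic condition. No coordinate choices, no Koszul computation, no extension across a bad locus.

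Your argument, by contrast, has a genuine gap at the extension step. You appeal to reflexivity of $\Omega^1_X(\log C)$, but Saito's theorem is for logarithmic sheaves along \emph{divisors}; you give no argument that the multi-logarithmic sheaf along a codimension-$2$ complete intersection is reflexive, and your own hedging (``at worst failing $S_2$\ldots'') shows you sense the problem. The lemma also does not assume isolated singularities of $C$, only that $C$ is a reduced complete intersection, so invoking that hypothesis here is illegitimate. Your route can be repaired: from $dh\wedge\alpha \in h\,\Omega^2_X$ and the regular sequence $(h_1,h_2)$ one does get $dh_i\wedge\alpha \in h_i\,\Omega^2_X$; the remaining question is whether $\alpha \in (h_1,h_2)\,\Omega^1_X$, i.e.\ whether the image of $\alpha$ in $\Omega^1_X|_C$ vanishes. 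Your coordinate computation shows this on $C_{\mathrm{reg}}$, and since $\Omega^1_X|_C$ is locally free on the \emph{reduced} scheme $C$ (hence torsion-free as an $\mathcal{O}_C$-module), vanishing on the dense open $C_{\mathrm{reg}}$ forces vanishing on all of $C$. With that fix the argument is complete, but it is still considerably longer than the paper's five-line verification via Saito's $\wedge$-closedness.
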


\begin{proof} Let $h_{1}, h_{2}$ and $h = h_{1}h_{2}$ be the equations which  define  $D_{1}, D_{2}$ and $D$, respectively, on an open subset $U \subset X$.
Given $\omega \in \Omega^{1}_{X}(\log D)$  we have that $\theta := dh\wedge \omega$ is holomorphic. Thus,

\begin{equation}\label{eq 02}
\theta = h_{2}dh_{1} \wedge\omega + h_{1}dh_{2}\wedge\omega  \Rightarrow dh_{1}\wedge \omega = \dfrac{\theta}{h_{2}} - h_{1}\dfrac{dh_{2}}{h_{2}}\wedge \omega.
\end{equation}

\noindent Note that $ \theta/h_{2} \in \Omega^{2}_{X}(D_{2})$. Therefore, it is only necessary to prove that $$h_{1}\dfrac{dh_{2}}{h_{2}}\wedge \omega \in \Omega^{2}_{X}(D_{2}).$$  
Since $\Omega_X^{\bullet}(\log D)$ is $\wedge$-closed (see \cite{Sai}, 1.3 (ii)), we have that $\dfrac{dh_{2}}{h_{2}}\wedge \omega \in \Omega^{2}_{X}(\log D)$ and, consequently,  $h_2(h_1\dfrac{dh_{2}}{h_{2}}\wedge \omega) = h\dfrac{dh_{2}}{h_{2}}\wedge \omega$ is holomorphic. Thus, we get $dh_{1}\wedge \omega \in \Omega^{2}_{X}(D_{2})$.  Analogously, we can  show that
$dh_{2}\wedge \omega \in \Omega^{2}_{X}(D_{1})$.

\end{proof}

\begin{prop}\label{teo 2.4} Let $D= D_{1} \cup D_{2}$ be a reduced hypersurface in   $X$, where $D_{i}$ is a reduced hypersurface, for $i=1,2,$ and $C= D_{1}\cap D_{2}$ is a reduced  complete  intersection. Then
$$\Omega^{1}_X(\log D) = \Omega^{1}_X(\log D_{1}) + \Omega^{1}_X(\log D_{2}).$$

\begin{proof} Note that $\Omega^{1}_X(\log D_{1}) + \Omega^{1}_X(\log D_{2}) \subset \Omega^{1}_X(\log D)$. On the other hand, given $\omega \in \Omega^{1}_X(\log D)$, by Lemma \ref{lema 2.3} and equality (\ref{dd01}),  we have  
$$\omega = \dfrac{\theta_{1}}{h_{1}} + \dfrac{\theta_{2}}{h_{2}},$$
 where $\theta_{1}$ and $\theta_{2}$ are holomorphic $1$-forms.  We get
\begin{eqnarray}\nonumber
dh\wedge \omega &=& h_{2}dh_{1}\wedge \dfrac{\theta_{1}}{h_{1}} + h_{2}dh_{1}\wedge \dfrac{\theta_{2}}{h_{2}}+ h_{1}dh_{2}\wedge \dfrac{\theta_{1}}{h_{1}} + h_{1}dh_{2}\wedge \dfrac{\theta_{2}}{h_{2}}=\\\nonumber &=&  dh_{1}\wedge \theta_{2}+ dh_{2}\wedge \theta_{1}+ h_{2}dh_{1}\wedge \dfrac{\theta_{1}}{h_{1}}+h_{1}dh_{2}\wedge \dfrac{\theta_{2}}{h_{2}}. 
\end{eqnarray}

\noindent Consequently, 
$$h_{2}dh_{1}\wedge \dfrac{\theta_{1}}{h_{1}}+h_{1}dh_{2}\wedge \dfrac{\theta_{2}}{h_{2}}$$ is holomorphic. Since $(h_{1},h_{2})$ is a regular sequence, we get $dh_{i}\wedge \theta_{i}=h_{i}\alpha_{i}$ for some holomorphic form $\alpha_{i}$ and $i=1,2$.  Then each $dh_{i} \wedge \dfrac{\theta_{i}}{h_{i}}$ is holomorphic and thus we obtain  
$$\omega = \dfrac{\theta_{1}}{h_{1}} + \dfrac{\theta_{2}}{h_{2}} \in  \Omega^{1}_X(\log D_{1}) + \Omega^{1}_X(\log D_{2}).$$

\end{proof}

\end{prop}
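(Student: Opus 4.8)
The plan is to prove the two inclusions separately; the first is formal and the second carries all the content. For $\Omega^1_X(\log D_1) + \Omega^1_X(\log D_2) \subseteq \Omega^1_X(\log D)$ one only observes that a meromorphic form with at most a simple pole along $D_i$ has at most a simple pole along the larger divisor $D = D_1\cup D_2$, and that the condition imposed on the exterior derivative is additive; hence $\omega_1 + \omega_2 \in \Omega^1_X(\log D)$ whenever $\omega_i \in \Omega^1_X(\log D_i)$. Since equality of sheaves is local, everything else may be checked on a small open set.

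For the reverse inclusion I would work locally, letting $h_1, h_2$ be reduced equations of $D_1, D_2$ and $h = h_1 h_2$. By Lemma \ref{lema 2.3} combined with the identification (\ref{dd01}) supplied by the multi-logarithmic theory in the range $q = 1 < 2 = k$, every $\omega \in \Omega^1_X(\log D)$ can be written as $\omega = \theta_1/h_1 + \theta_2/h_2$ with $\theta_1,\theta_2$ holomorphic $1$-forms. The delicate point is that this only exhibits the summands as forms with a simple pole, whereas one needs each $\theta_i/h_i$ to be genuinely \emph{logarithmic} along $D_i$; by Saito's criterion (\cite{Sai}), and since $\theta_i$ is already holomorphic, this is equivalent to the divisibility $dh_i\wedge\theta_i \in h_i\,\Omega^2_X$.

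To extract that divisibility I would use that $\omega$ is logarithmic along the \emph{whole} of $D$, so $dh\wedge\omega$ is holomorphic (Saito's criterion again). Substituting $dh = h_2\,dh_1 + h_1\,dh_2$ and $\omega = \theta_1/h_1 + \theta_2/h_2$, the two cross terms $dh_1\wedge\theta_2 + dh_2\wedge\theta_1$ are visibly holomorphic, so one is left with the information that $h_2^2\,(dh_1\wedge\theta_1) + h_1^2\,(dh_2\wedge\theta_2)$ lies in $(h_1 h_2)\,\Omega^2_X$. Here the complete-intersection hypothesis on $C = D_1\cap D_2$ enters decisively: $(h_1,h_2)$ is a regular sequence, so reducing modulo $h_1$ kills the second summand and leaves $h_2^2\,(dh_1\wedge\theta_1)\equiv 0$ in the free $\mathcal{O}_X/(h_1)$-module $\Omega^2_X/h_1\Omega^2_X$, on which $h_2$ (hence $h_2^2$) acts injectively; therefore $dh_1\wedge\theta_1 \in h_1\,\Omega^2_X$, and symmetrically $dh_2\wedge\theta_2\in h_2\,\Omega^2_X$. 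Thus each $\theta_i/h_i\in\Omega^1_X(\log D_i)$, which gives $\omega\in\Omega^1_X(\log D_1)+\Omega^1_X(\log D_2)$.

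The main obstacle is precisely this gap between ``the pieces $\theta_i/h_i$ have simple poles'' — all that the decomposition (\ref{dd01}) provides — and ``the pieces are logarithmic'': closing it forces a return to the logarithmic condition on $\omega$ itself and an essential use of the regular-sequence structure of $C$ to cancel denominators. A secondary care point is to carry out the division step through the action of non-zero-divisors on the locally free module $\Omega^2_X$ modulo $h_1$, so that no unique-factorization argument is needed.
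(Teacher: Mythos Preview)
Your proposal is correct and follows essentially the same route as the paper: both use Lemma~\ref{lema 2.3} together with (\ref{dd01}) to write $\omega=\theta_1/h_1+\theta_2/h_2$, then exploit holomorphicity of $dh\wedge\omega$ and the regular sequence $(h_1,h_2)$ to obtain $dh_i\wedge\theta_i\in h_i\,\Omega^2_X$. The only cosmetic difference is that you first clear denominators to reach $h_2^2\,dh_1\wedge\theta_1+h_1^2\,dh_2\wedge\theta_2\in h_1h_2\,\Omega^2_X$ and then invoke the non-zero-divisor property on $\Omega^2_X/h_i\Omega^2_X$, whereas the paper states the divisibility conclusion directly; the underlying argument is identical.
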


\subsection{ Computations on the Chern class } \label{dd002}

Let $D$ be a reduced divisor on $X$ and consider a decomposition (not necessarily irreducible) $D = D_1 \cup D_2$, where $D_1$ and
$D_2$ have isolated singularities. We have that the sheaves $\Omega^{1}_X(\log D_{1}), \Omega^{1}_X(\log D_{2})$ and $\Omega^{1}_X(\log D)$ are not locally free and therefore not all the properties of the Chern classes for bundles are applicable in here.

In any case, we have (see \cite{Ale_s}, Claim 3) the following sequence 
\begin{eqnarray}\nonumber
\xymatrix{
0\ar[r]& \Omega_X^{1}  \ar[r]&   \Omega^{1}_X(\log D_{1})\oplus \Omega^{1}_X(\log D_{2})  \ar[r]&    \Omega^{1}_X(\log D_{1}) + \Omega^{1}_X(\log D_{2})  \ar[r]&0, \\
}
\end{eqnarray}
and since $ \Omega^{1}_X(\log D_{1}) + \Omega^{1}_X(\log D_{2}) = \Omega^{1}_X(\log D) $, by Proposition  \ref{teo 2.4}, we obtain the exact sequence 
\begin{eqnarray}\label{Log12}
\xymatrix{
0\ar[r]& \Omega_X^{1}  \ar[r]&   \Omega^{1}_X(\log D_{1})\oplus \Omega^{1}_X(\log D_{2})  \ar[r]&   \Omega^{1}_X(\log D)  \ar[r]& 0.} 
\end{eqnarray}

On the other hand, since $\Omega^{1}_X(\log D_{1}), \Omega^{1}_X(\log D_{2})$ and $\Omega^{1}_X(\log D)$ reflexives sheaves (see \cite{Sai}, Corollary  1.7), and using the sequences (\ref{Log12}) we obtain
\begin{eqnarray}\nonumber
c(\Omega^{1}_X)c(\Omega_X^{1}(\log D)) = c(\Omega_X^{1}(\log D_{1})\oplus \Omega_X^{1}(\log D_{2})).
\end{eqnarray}

Thus

\begin{eqnarray}\nonumber
c(\Omega^{1}_X)c(\Omega_X^{1}(\log D)) &=& c(\Omega_X^{1}(\log D_{1})) c(\Omega_X^{1}(\log D_{2})) 
\\\nonumber  &=& c(\Omega_X^{1})c(\OO_{D_1})  c(\Omega_X^{1})c(\OO_{D_2}) , 
\end{eqnarray}
\noindent where in last equality we use the following relations
\begin{eqnarray}\label{c122}
c(\Omega_X^{1}(\log D_{i})) = c(\Omega_X^{1})c(\OO_{D_i}),\,\,\,\, i = 1,2,
\end{eqnarray}
\noindent which can be obtained from the exact sequence (\ref{seq0002}).
Therefore, we obtain the following expression for the  Chern class of the sheaf  $\Omega_X^{1}(\log D)$
\begin{eqnarray}\label{c12}
c(\Omega_X^{1}(\log D)) = c(\OO_{D_{1}}) c(\OO_{D_{2}})c( \Omega_X^{1}),
\end{eqnarray}
\noindent which will be essential in the calculations below.

\subsection{The GSV-Index}
X. G\'omez-Mont, J. Seade and A. Verjovsky \cite{GSV} introduced the  GSV-index for a holomorphic vector field over an analytic hypersurface  with isolated singularities  on a complex manifold,  generalizing the (classical) Poincar\'e-Hopf  index. The GSV-index was extended  for  continuous  vector fields on more general contexts.  J. Seade  and  T. Suwa in \cite{SeaSuw1}  have defined the GSV-index for continuous   vector fields on analytic subvarieties with  isolated complete intersection singularity. J.-P. Brasselet, J. Seade and T. Suwa in \cite{a03}  extended the notion of GSV-index for vector fields defined in certain types of analytical subvariety with non-isolated singularities.

\hyphenation{using}
In \cite{a08}   X. G\'omez-Mont  introduced  the   homological index  of a  holomorphic vector field on an analytic hypersurface with isolated singularities, which coincides with the GSV-index. There is also the {\it virtual index}, introduced by D. Lehmann, M. Soares and T. Suwa \cite{a07}, that via Chern-Weil theory can be interpreted as the GSV-index. M. Brunella \cite{a09} 
 also presents the GSV-index for foliations on complex surfaces  by a different approach and in \cite{MD2} 
  the first and third named authors 
  have  introduced a  GSV type  index  for varieties invariant by
 holomorphic Pfaff systems.

\hyphenation{sin-gu-la-ri-ties}
\hyphenation{sin-gu-la-ri-ty}

Let us recall the  definition of the GSV-index  (\cite{BarSaeSuw}, Ch.3, 3.2). 
Let $ D$ be a hypersurface  with isolated singularities  on an $n$-dimensional complex manifold $X$ and let $v$  be a holomorphic vector field on $X$ with isolated singularities, and logarithmic along $ D$. Given a singular point $x_0 \in Sing( D)$, let $h $  be an analytic function  defining  $ D$ on a neighborhood $U_0$ of $x_0$. The gradient vector field $\overline{grad}\,(h)$ is nowhere vanishing away from $x_0$, because $x_0$ is an isolated singularity.  

Denote by $v_{\ast}$ the restriction of $v$ to the  regular part $ D_{reg} =  D - Sing( D)$ of $D$. On the neighborhood $U_0$,  suppose that  the vector field $v$ is non-singular away from $x_0$. Since $v$ is  logarithmic along $ D$, we have that $\overline{grad}\,(h)(z)$ and $v_{\ast}(z)$ are linearly independent at each point  $z\in U_0 \cap ( D - \{x_0\})$. Assume that $(z_1,\ldots,z_n)$ is a system of complex coordinates on $U_0$ and consider 
 $$S_{\varepsilon} = \{z= (z_1,\ldots,z_n):\,\,\mid\mid z-x_0\mid\mid \,\, = \,\, \varepsilon\}$$  
 the sphere   sufficiently small so that $K =  D \cap S_{\varepsilon} $ is the link of the singularity of $ D$ at $x_0$ (see, for example, \cite{JM}). It is an $(2n-1)$-dimensional real oriented manifold. By using the Gram-Schmidt process, if necessary, the vector fields $v_{\ast}$ and $\overline{grad}\,(h)$  define a continuous map
$$
\phi_{v}:=(v_{\ast},\overline{grad}\,(h)): K \longrightarrow W_{2,n+1}
$$  

\noindent where $W_{2,n+1}$ is the Stiefel manifold of complex $2$-frames in $\CC^{n+1}$. 

\begin{defi}\label{GSV-defi}
The GSV-index of $v$ in $x_0 \in  D$, denoted by $GSV(v, D,x_0)$, is defined as the degree of map $\phi_{v}$. 
\end{defi}

\begin{remark} 
In the definition \ref{GSV-defi} the vector field $v$ can be considered continuous rather than holomorphic.  For more details see  \cite{BarSaeSuw}, \cite{GSV}  and  \cite{Suw2}.
\end{remark}

\begin{remark} \label{rem001}
If $x_0\in D_{reg}$ is a regular point of $ D$, since $v$ logarithmic along $ D$, we have that the Poincar\'e-Hopf index of $v|_{ D}$ in $x_0$ is defined and it coincides with the GSV-index. Then $PH(v,x_0) = PH(v|_{ D},x_0)$ and we have 
\begin{eqnarray}
GSV(v, D,x_0) = PH(v,x_0).
\end{eqnarray}
\end{remark}
\hyphenation{fo-lia-tion}
\section{Proof of the Theorems} In order to prove the Theorem \ref{T1} and  the Corollary \ref{TT2} we will prove the following preliminary result: 

\begin{teo}\label{teo101}
Let $X$ be an $n$-dimensional ($n\geq 3$) complex compact manifold and $D$ a reduced  divisor in $X$.
\begin{itemize}
\item [(i)] If $D = D_1\cup D_2$ is any (not necessarily irreducible) decomposition, where $D_1$, $D_2$ 
have isolated singularities and
 $C=D_1\cap D_2$  is a  codimension $2$ variety and has isolated   singularities, then
\end{itemize}
\begin{eqnarray} \nonumber \int_{X} c_{n}(\Omega^{1}(\log D)) & = & (-1)^{n}\left[ \int_{X} c_{n}(TX) - \int_{D_ {1}}c_{n-1}(TX - [D_{1}]) - \int_{D_ {2}}c_{n-1}(TX - [D_{2}]) \right] + \\\nonumber & & \\ \nonumber   && + \left[ \int_{C} c_{n-2}(TX - [D_{1}]\oplus [D_{2}]) \right]. 
\end{eqnarray}
\begin{itemize}
\item [(ii)] If $D$ 
is a divisor with isolated singularities,  then
\end{itemize}
\begin{eqnarray}\label{for001} 
\int_{X} c_{n}(\Omega_X^{1}(\log  D)  =  (-1)^{n} \left[ \int_{X} c_{n}(T_X) - \int_{ D}c_{n-1}(T_X - [ D]) \right].
\end{eqnarray}
\end{teo}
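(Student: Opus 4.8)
The plan is to treat Theorem \ref{teo101} as a bookkeeping consequence of the Chern-class identities already established in Subsection \ref{dd002}. For part (i) the only input needed is formula (\ref{c12}), $c(\Omega^1_X(\log D)) = c(\OO_{D_1})\,c(\OO_{D_2})\,c(\Omega^1_X)$; for part (ii) the input is the exact sequence (\ref{seq0002}), which applies because $D$ has isolated singularities and $n\geq 3$, so the singular locus of $D$ has codimension $n>2$, and which gives $c(\Omega^1_X(\log D)) = c(\OO_D)\,c(\Omega^1_X)$. To these one adjoins the structure-sheaf resolution $0\to\OO_X(-D_i)\to\OO_X\to\OO_{D_i}\to 0$, from which the multiplicativity of the total Chern class yields $c(\OO_{D_i}) = (1-[D_i])^{-1} = 1+[D_i]+[D_i]^2+\cdots$, where $[D_i]:=c_1(\OO_X(D_i))$; likewise $c(\OO_D) = (1-[D])^{-1}$.

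I would then extract the degree-$n$ component and integrate over $X$. For part (ii), $c_n(\Omega^1_X(\log D)) = \sum_{j=0}^{n} c_j(\Omega^1_X)\,[D]^{\,n-j}$; the term $j=n$ integrates to $\int_X c_n(\Omega^1_X) = (-1)^n\int_X c_n(T_X)$ via $c_j(\Omega^1_X)=(-1)^j c_j(T_X)$, while for $j<n$ one applies the projection formula along $\iota\colon D\hookrightarrow X$, i.e.\ $\int_X \alpha\cdot[D] = \int_D \iota^*\alpha$, to get $\sum_{p+j=n-1}\int_D (c_j(\Omega^1_X)[D]^p)|_D = \int_D c_{n-1}\big(\Omega^1_X - \OO_X(-D)\big)|_D$. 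Since $\Omega^1_X - \OO_X(-D)$ is the $K$-theoretic dual of $T_X - \OO_X(D)=T_X-[D]$, this last integral equals $(-1)^{n-1}\int_D c_{n-1}(T_X-[D])$, and part (ii) follows from $(-1)^{n-1}=-(-1)^n$.

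Part (i) is the same computation with two divisors. Writing $c_n(\Omega^1_X(\log D)) = \sum_{a+b+j=n}[D_1]^a[D_2]^b c_j(\Omega^1_X)$, I would split the integral over $X$ according to $(a,b)=(0,0)$, $a\geq 1,b=0$, $a=0,b\geq 1$, and $a,b\geq 1$. The first block gives $(-1)^n\int_X c_n(T_X)$; the second and third, by the projection formula along $D_i\hookrightarrow X$ and the same dualization, give $(-1)^{n-1}\int_{D_i} c_{n-1}(T_X-[D_i])$; and the fourth, using that $D_1,D_2$ meet properly so that $[D_1]\cdot[D_2]=[C]$ and then pushing forward along $C\hookrightarrow X$, gives $\sum_{p+q+j=n-2}\int_C (c_j(\Omega^1_X)[D_1]^p[D_2]^q)|_C = \int_C c_{n-2}\big(\Omega^1_X-\OO_X(-D_1)-\OO_X(-D_2)\big)|_C = (-1)^{n-2}\int_C c_{n-2}(T_X-[D_1]\oplus[D_2])$. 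Assembling the four blocks and reading off the common factor of $(-1)^n$ yields the stated identity; in effect the whole right-hand side is $(-1)^n$ times a virtual inclusion--exclusion of Euler-type integrals over $X$, $D_1$, $D_2$ and $C$.

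The points that need care are, first, the Chern-class formalism for the non-locally-free sheaves $\Omega^1_X(\log D_i)$, $\Omega^1_X(\log D)$ and $\OO_{D_i}$: since $X$ is smooth these are perfect, hence have well-defined classes in $K(X)$, and the sequences (\ref{Log12}), (\ref{c122}), (\ref{seq0002}) and the structure-sheaf resolution are equalities in $K(X)$ on which the total Chern class is multiplicative — precisely what Subsection \ref{dd002} supplies. Second, and this is where arithmetic slips are easiest, one must track the duality sign $c_k(\Omega^1_X)=(-1)^k c_k(T_X)$ consistently through each push-forward to a lower-dimensional subvariety, so that the pieces over $X$, $D_i$ and $C$ recombine with a single correct global sign. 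Finally, one needs the cycle identity $[D_1]\cdot[D_2]=[C]$ — which uses exactly the hypothesis that $C=D_1\cap D_2$ is genuinely of codimension $2$ — together with the validity of $\int_X [Z]\cdot\alpha = \int_Z \alpha|_Z$ for the singular subvarieties $D_i$ and $C$; this holds because $D_1,D_2$ and the codimension-$2$ complete intersection $C$ are local complete intersections, hence carry fundamental classes compatible with restriction of classes from $X$.
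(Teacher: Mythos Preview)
Your proposal is correct and follows essentially the same approach as the paper's proof: both start from the total Chern class identity $c(\Omega^1_X(\log D)) = c(\OO_{D_1})c(\OO_{D_2})c(\Omega^1_X)$ derived in Subsection~\ref{dd002}, expand the degree-$n$ component as $\sum_{a+b+j=n}[D_1]^a[D_2]^b c_j(\Omega^1_X)$, split into the four blocks $(a,b)=(0,0)$, $(a\geq 1,b=0)$, $(a=0,b\geq 1)$, $(a,b\geq 1)$, push each block forward to $X$, $D_1$, $D_2$, $C$ via Poincar\'e duality, and convert $\Omega^1_X$ to $T_X$ by the duality sign $c_k(\Omega^1_X)=(-1)^k c_k(T_X)$. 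Your write-up is somewhat more explicit about the $K$-theoretic and fundamental-class justifications for the non--locally-free sheaves and singular subvarieties, but the argument is the same.
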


\begin{proof} We prove the item (i); the proof of (ii) is similar.\\
By the equation (\ref{c12})
\begin{eqnarray}\nonumber
c(\Omega_X^{1}(\log D)) = c(\OO_{D_{1}}) c(\OO_{D_{2}})c( \Omega_X^{1}),
\end{eqnarray}
\noindent we have, 
\begin{eqnarray} \nonumber
\int_{X}c_{n}( \Omega_X^{1}(\log D)) &=& \displaystyle \int_{X}\sum_{i_{1} + i_{2} + i_{3} = n} c_{i_{1}}(\OO_{D_{1}}) c_ {i_{2}}(\OO_{D_{2}})c_{i_{3}}(\Omega_X^{1}) = \\\nonumber && \\\nonumber
&=& \int_{X} c_{n}(\Omega_X^{1}) +  \sum_{\substack{i_{2} + i_{3} = n \\ i_{2}\geq 1}} \int_{X}c_ {i_{2}}(\OO_{D_{2}})c_{i_{3}}(\Omega_X^{1}) + \sum_{\substack{i_{1} + i_{3} = n \\ i_{1}\geq 1}} \int_{X} c_ {i_{1}}(\OO_{D_{1}})c_{i_{3}}(\Omega_X^{1}) + \\\nonumber && \\\nonumber &+& \sum_{\substack{i_{1} + i_{2} + i_{3} = n \\ i_{1},i_{2}\geq 1}} \int_{X} c_{i_{1}}(\OO_{D_{1}}) c_ {i_{2}}(\OO_{D_{2}})c_{i_{3}}(\Omega_X^{1})= \\\label{f}\nonumber && \\\nonumber
&=& (-1)^n\int_{X} c_{n}(T_X) +  \sum_{\substack{i_{2} + i_{3} = n \\ i_{2}\geq 1}} \int_{X}c_1([D_{2}])^ {i_{2}}c_{i_{3}}(\Omega_X^{1}) + \sum_{\substack{i_{1} + i_{3} = n \\ i_{1}\geq 1}} \int_{X} c_1([D_{1}])^ {i_{1}}c_{i_{3}}(\Omega_X^{1}) + \\ \nonumber && \\\nonumber && +\sum_{\substack{i_{1} + i_{2} + i_{3} = n \\ i_{1},i_{2}\geq 1}} \int_{X} c_1([D_{1}])^ {i_{1}} c_1([D_{2}])^ {i_{2}}c_{i_{3}}(\Omega_X^{1}),
\end{eqnarray}
\noindent where in the last step we are using that $c_ {i_{j}}(\OO_{D_{j}})=c_1([D_{j}])^ {i_{j}}$, since $c(\OO_{D_{j}})=c([D_{j}])^{-1}$, for $j=1,2$. 
 
The proof will be finalized by calculating each sum on the right hand side. Indeed, in the first one, by using that $c_1([D_{2}])$ is Poincar\'e  dual to the fundamental class of $D_2$, we obtain
\begin{eqnarray}\nonumber
\sum_{\substack{i_{2} + i_{3} = n \\ i_{2}\geq 1}} \int_{X}c_1([D_{2}])^ {i_{2}}c_{i_{3}}(\Omega_X^{1}) &=& \int_{D_ {2}}\sum_{\substack{i_{2} + i_{3} = n \\ i_{2}\geq 1}}c_{1}([D_{2}])^{i_{2}-1}c_{i_{3}}(\Omega_X^{1}) \\\nonumber && \\\nonumber
&=& \int_{D_ {2}}c_{n-1}(\Omega_X^{1} - [D_{2}]^{\ast})\\\nonumber && \\\nonumber
&=& (-1)^{n-1}\int_{D_ {2}}c_{n-1}(T_X - [D_{2}]),
\end{eqnarray}
\noindent where in the last step we are using the relation between the Chern classes
of a vector bundle and of its dual.
Similarly, we obtain 
\begin{eqnarray}\nonumber
\sum_{\substack{i_{1} + i_{3} = n \\ i_{1}\geq 1}} \int_{X}c_1([D_{1}])^ {i_{2}}c_{i_{3}}(\Omega_X^{1}) = (-1)^{n-1}\int_{D_ {1}}c_{n-1}(T_X - [D_{1}]).
\end{eqnarray}

Finally, the last sum can be calculated by using  that $c_1([D_{1}])c_1([D_{2}])$ is Poincar\'e  dual to the fundamental class of $C = D_1\cap D_2$.  Thus, 
\begin{eqnarray}\nonumber
\sum_{\substack{i_{1} + i_{2} + i_{3} = n \\ i_{1},i_{2}\geq 1}} \int_{X} c_1([D_{1}])^ {i_{1}} c_1([D_{2}])^ {i_{2}}c_{i_{3}}(\Omega_X^{1}) &=& \sum_{\substack{i_{1} + i_{2} + i_{3} = n \\ i_{1},i_{2}\geq 1}} \int_{C} c_1([D_{1}])^ {i_{1}-1} c_1([D_{2}])^ {i_{2}-1}c_{i_{3}}(\Omega_X^{1}) \\\nonumber && \\\nonumber
&=& \int_{C}c_{n-2}(\Omega_X^{1} - [D_{2}]^{\ast} - [D_{1}]^{\ast})\\\nonumber 
&=& (-1)^{n-2} \int_{C}c_{n-2}(T_X - [D_{1}]\oplus[D_{2}]).
\end{eqnarray}
Therefore, we conclude that
\begin{eqnarray} \nonumber \int_{X} c_{n}(\Omega^{1}(\log D)) & = & (-1)^{n}\left[ \int_{X} c_{n}(TX) - \int_{D_ {1}}c_{n-1}(TX - [D_{1}]) - \int_{D_ {2}}c_{n-1}(TX - [D_{2}]) \right] + \\\nonumber & & \\\nonumber   && + \left[ \int_{C}c_{n-2}(TX - [D_{1}]\oplus [D_{2}]) \right]. 
\end{eqnarray}

\end{proof}

\noindent {\bf Proof of Theorem \ref{T1}:} Using the classical Chern-Gauss-Bonnet formula (\ref{form001}), we obtain
\begin{eqnarray}\label{x06}
\int_{X}c_n(T_X) = (-1)^n\int_{X}c_n(\Omega_X^1) = \chi ( X).
\end{eqnarray}
From \cite[Theorem 3.9]{Suw2}, we have that
\begin{eqnarray}\label{x01}
\displaystyle\int_{D_i} c_{n-1}(T_X - [D_i]) \, = \, \chi(D_i)\,\, +\,\,(-1)^{n-1}\sum_{p\in Sing(D_i)}\mu_p(D_i), \ \ i=1,2.
\end{eqnarray}

Moreover, since $C = D_{1} \cap D_{2}$ is a   complete intersection $C = D_{1} \cap D_{2}$, its  normal bundle is  $([D_{1}]\oplus [D_{2}])|_{C}$,   and once again from  \cite[Theorem 3.9]{Suw2}) we have that 
\begin{eqnarray}\label{x02}
\int_{C}c_{n-2}(T_X - [D_{1}]\oplus [D_{2}]) \,= \, \chi(C)\,\, +\,\, (-1)^{n-2}\sum_{p\in Sing(C)}\mu_p(C). 
\end{eqnarray}

Now, substituting (\ref{x06}),  (\ref{x01}) and (\ref{x02}) in the formula of item $(i)$ of  Theorem \ref{teo101}, we get the desired formula
\begin{eqnarray}\label{x04}
\int_{X}c_{n}(\Omega_X^1(\log\, D)) &=& \\\nonumber \\\nonumber (-1)^{n}\chi(\tilde{X})   + \sum_{p\in Sing(D_1)}\mu_p(D_1) + \sum_{p\in Sing(D_2)}\mu_p(D_2) + \sum_{p\in Sing(C)}\mu_p(C).
\end{eqnarray}

On the other hand, if $v$ is a holomorphic vector field on $X$, with isolated singularities and logarithmic along $D_1, D_2$ and $C$, it follows from  \cite[Theorem 7.16]{Suw2} that for each $i=1,2$ 
\begin{eqnarray}\label{z02}
\displaystyle\int_{D_i} c_{n-1}(T_X - [D_i]) = \sum_{x\in S(v, D_i) }GSV(v, D_i, x)
\end{eqnarray}
and
\begin{eqnarray}\label{z03}
\displaystyle\int_{C} c_{n-2}(T_X - [D_{1}]\oplus[D_{2}]) = \sum_{x\in S(v, C) }GSV(v, C, x),
\end{eqnarray}
\noindent where $S(v, C) = [Sing(v) \cap  C ] \cup Sing(C)$ and $GSV(v, C, x)$ denotes the GSV-index of $v$ (relative to $C$) at $x$. Thus, using the classical Poincar\'e-Hopf Theorem, (\ref{z02}) and (\ref{z03}) in the item (i) of Theorem \ref{teo101} we get
\begin{eqnarray}\nonumber
(-1)^n\int_{X}c_{n}(\Omega_X^1(\log\, D))=
\end{eqnarray}
\begin{eqnarray}\nonumber
= \sum_{x\in \Sing(v)}PH(v,x) - \sum_{x\in S(v, D_1) }GSV(v, D_1, x) - \sum_{x\in S(v, D_2) }GSV(v, D_2, x) + \sum_{x\in S(v, C) }GSV(v, C, x) . 
\end{eqnarray}

Now, replacing it in the formula (\ref{x04}), we get
\begin{eqnarray}\nonumber
\chi(\tilde{X}) =  \sum_{x\in \Sing(v)}PH(v,x) - \sum_{x\in S(v, D_1) }GSV(v, D_1, x) - \sum_{x\in S(v, D_2) }GSV(v, D_2, x) + \\\nonumber \\\nonumber +  \sum_{x\in S(v, C) }GSV(v, C, x)    + (-1)^{n-1}\left[ \sum_{p\in Sing(D_1)}\mu_p(D_1) + \sum_{p\in Sing(D_2)}\mu_p(D_2) + \sum_{p\in Sing(C)}\mu_p(C)\right].
\end{eqnarray}
$\square$

The proof of Corollary  \ref{TT2}  uses the same techniques.

\section{Proof of Corollary \ref{cor-Pn}}

First of all we proof the following formula

$$
\chi (\mathbb{P}^n \setminus D) = \displaystyle\sum^{n}_{i=0} (-1)^i(d-1)^i + (-1)^{n+1}  \sum_{p\in Sing( D)}\mu_p( D).
$$

Indeed, it  follows for Corollary \ref{TT2} that
$$
 (-1)^{n} \chi (\mathbb{P}^n \setminus D) =  \int_{\mathbb{P}^n}c_{n}(\Omega^1_{\mathbb{P}^n}(\log\,  D)) -  \sum_{p\in Sing( D)}\mu_p( D).  
$$
The  total Chern classe of  $ c(\Omega^1_{\mathbb{P}^n}(\log\,  D))$ is 
 $$
 c(\Omega^1_{\mathbb{P}^n}(\log\,  D))=\frac{c(\Omega^1_{\mathbb{P}^n})}{c(\mathcal{O}_{\mathbb{P}^n}(-d))}=c(\Omega^1_{\mathbb{P}^n}-\mathcal{O}_{\mathbb{P}^n}(-d)).
 $$
 In particular,  $ c_n(\Omega^1_{\mathbb{P}^n}(\log\,  D))= c_n(\Omega^1_{\mathbb{P}^n}-\mathcal{O}_{\mathbb{P}^n}(-d))= c_n(\Omega^1_{\mathbb{P}^n}\otimes \mathcal{O}_{\mathbb{P}^n}(d)))$.
 Since $c_n(\Omega^1_{\mathbb{P}^n}\otimes \mathcal{O}_{\mathbb{P}^n}(d)))=(-1)^{n}c_n(T_{\mathbb{P}^n}\otimes \mathcal{O}_{\mathbb{P}^n}(-d)))$, we conclude that 
   $$ c_n(\Omega^1_{\mathbb{P}^n}(\log\,  D))=(-1)^{n}c_n(T_{\mathbb{P}^n}\otimes \mathcal{O}_{\mathbb{P}^n}(-d))).$$
We have 
 $$
 c_n(T_{\mathbb{P}^n}\otimes \mathcal{O}_{\mathbb{P}^n}(-d)))=    \displaystyle\sum^{n}_{i=0} (1-d)^i h^n= \displaystyle\sum^{n}_{i=0} (-1)^i(d-1)^ih^n.
 $$
Thus,
 we get 
$$
 (-1)^{n} \chi (\mathbb{P}^n \setminus D) =  (-1)^{n} \displaystyle\sum^{n}_{i=0} (-1)^i(d-1)^i-  \sum_{p\in Sing( D)}\mu_p( D) .
$$

The general formula that appears  in Corollary \ref{cor-Pn} can be computed similarly using the Theorem (\ref{T1}) and the equation (\ref{c12}) which gives us  
$$
 c_n(\Omega^1_{\mathbb{P}^n}(\log\,  D_{1}\cup D_{2}))=\left[\frac{c(\Omega^1_{\mathbb{P}^n})}{c(\mathcal{O}_{\mathbb{P}^n}(-d_1)) c(\mathcal{O}_{\mathbb{P}^n}(-d_2))}\right]_n=
 \left[\frac{(1-h)^{n+1}}{(1-d_1h)(1-d_2h)}\right]_n.
 $$
It follows from  \cite[Proposition 4.4]{CSV} that 
$$
 \left[\frac{(1-h)^{n+1}}{ (1-d_1h)(1-d_2h)}\right]_n= \displaystyle\sum^{n}_{i=0}   \sigma_{n-i}(d_1-1,   d_2-1)h^n,
 $$
 where  $\sigma_{n-i}$ is the  complete symmetric function of degree $n-i$.

\end{document}